\DeclareSymbolFontAlphabet{\mathbb}{AMSb} %to ensure that the meaning of \mathbb does not change
\DeclareSymbolFontAlphabet{\mathbbl}{bbold}
\newtheorem{thm}{Theorem}[subsection]
\newtheorem{lem}[thm]{Lemma}
\theoremstyle{definition}
\theoremstyle{remark}
\newtheorem{rem}[thm]{Remark}
\begin{document}

\newcommand\restr[2]{{% we make the whole thing an ordinary symbol
  \left.\kern-\nulldelimiterspace % automatically resize the bar with \right
  #1 % the function
  \vphantom{\big|} % pretend it's a little taller at normal size
  \right|_{#2} % this is the delimiter
  }}

\makeatletter
% we use \prefix@<level> only if it is defined
\renewcommand{\@seccntformat}[1]{%
  \ifcsname prefix@#1\endcsname
    \csname prefix@#1\endcsname
  \else
    \csname the#1\endcsname\quad
  \fi}
% define \prefix@section
%\newcommand\prefix@section{Section \thesection: }
\makeatother

\makeatletter
\newcommand{\colim@}[2]{%
  \vtop{\m@th\ialign{##\cr
    \hfil$#1\operator@font colim$\hfil\cr
    \noalign{\nointerlineskip\kern1.5\ex@}#2\cr
    \noalign{\nointerlineskip\kern-\ex@}\cr}}%
}
\newcommand{\colim}{%
  \mathop{\mathpalette\colim@{\rightarrowfill@\textstyle}}\nmlimits@
}
\makeatother

\newcommand\rightthreearrow{%
        \mathrel{\vcenter{\mathsurround0pt
                \ialign{##\crcr
                        \noalign{\nointerlineskip}$\rightarrow$\crcr
                        \noalign{\nointerlineskip}$\rightarrow$\crcr
                        \noalign{\nointerlineskip}$\rightarrow$\crcr
                }%
        }}%
}

\title{A short geometric derivation of the dual Steenrod algebra}         % Enter your title between curly bracse
\author{Kiran Luecke}        % Enter your name between curly braces
\date{\today}          % Enter your date or \today between curly braces
\maketitle

\begin{abstract}
    \noindent This two-page note gives a non-computational derivation of the dual Steenrod algebra as the automorphisms of the formal additive group. Instead of relying on computational tools like spectral sequences and Steenrod operations, the argument uses a few simple universal properties of certain cohomology theories.
\end{abstract}

All algebras are graded and commutative. $C_2$ is the cyclic group of order 2. 

\noindent
\textbf{Preamble:} Let $MT$ be the category of pairs $(F,s)$ where $F$ is a contravariant functor from the homotopy category of finite spectra to the category of graded abelian groups that takes sums to products and $s$ is a natural isomorphism from $F$ to $[1]\circ F\circ\Sigma$, where $[1]$ denotes the shift-of-grading functor. A morphism $(F,s)\to (F',s')$ is a natural transformation $F\to F'$ that takes $s$ to $s'$. Let $E$ be a homotopy ring spectrum and $E^*=\pi_{-*}E$. Consider the ``Conner-Floyd" functor
$\text{Alg}_{E^*}\rightarrow MT$
$$A\mapsto (X\mapsto EA^*X:=E^*X\otimes_{E^*}A).$$
Suppose that $E_*E$ is flat over $E^*$, so that $E(E_*E)$ is represented by $E\wedge E$. Consider the natural transformation
$$\eta_E:\text{Spec}E_*E\rightarrow \text{Hom}_{MT}(E,E-)$$
$$(E_*E\xrightarrow{f}A)\mapsto \eta_E(f):=(E^*X\xrightarrow{1\wedge\text{id}}(E\wedge E)^*X\simeq E(E_*E)^*X\xrightarrow{F}EA^*X)$$
The coproduct on $E_*E$ induced by the map $E\wedge E\xrightarrow{\text{id}\wedge 1\wedge\text{id}}E\wedge E\wedge E$ induces an ``internal" composition of morphisms $E\rightarrow EA$ in the image of $\eta_E$. If $EA$ is a cohomology theory (e.g. if $A$ is flat over $E^*$) then it corresponds to a (homotopy ring) spectrum which is an $E$-module, harmlessly denoted by $EA$, and there is an inverse to $\eta$ given by sending a transformation $T:E\rightarrow EA$ to the map on homotopy induced by
$E\wedge E\xrightarrow{\text{id}\wedge T}E\wedge EA\rightarrow EA.$

\begin{lem} Let $E$ be a real oriented homotopy ring spectrum such that $E_*E$ is flat over $E^*$. Write $\mathbb{G}_E$ for the formal group with coordinate $\text{Spf}E^*BC_2$ and write $\text{Aut}\mathbb{G}_E$ for the group-valued presheaf on $\text{Alg}_{E^*}$ sending $R$ to the group of strict automorphisms of $\mathbb{G}_H(R)$. Then there is a morphism $\text{Spec}(E_*E)\xrightarrow{ev_E}\text{Aut}\mathbb{G}_E$ of monoid-valued presheaves on $\text{Alg}_{E^*}$.

\end{lem}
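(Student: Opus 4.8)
The plan is to construct $ev_E$ by restricting the $MT$-morphisms produced by $\eta_E$ to the space $BC_2$ and reading off the induced change of coordinate on $\mathbb{G}_E$. Fix $R\in\mathrm{Alg}_{E^*}$ and a point $f\colon E_*E\to R$ of $\mathrm{Spec}(E_*E)$, and set $\phi_f:=\eta_E(f)\colon E\to ER$. Because $1\wedge\mathrm{id}\colon E\to E\wedge E$ is a map of homotopy ring spectra and $f$ is an algebra map, the induced map $E\wedge E\to ER$ is multiplicative, hence so is $\phi_f$ as a natural transformation on cohomology (a map of homotopy ring spectra when $ER$ is representable); on coefficients it is the structure map of the $E^*$-algebra $R$, which is unital. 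Evaluating $\phi_f$ on $BC_2$ — read off the finite skeleta $\mathbb{RP}^n$ and passed to the limit — and using that the Conner--Floyd functor is $E^*(-)\otimes_{E^*}R$ naturally, so that $ER^*BC_2\cong R[[x_E]]$ (writing $x_E$ also for its image) is the coordinate ring of the base change $\mathbb{G}_E(R):=\mathbb{G}_E\times_{E^*}R$, with formal group law the base change $F_E^R$ of the formal group law $F_E$ of $\mathbb{G}_E$, I obtain
$$ev_E(f)\;:=\;\phi_f(x_E)\;\in\;\widetilde{ER}^{1}(BC_2).$$

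I would then verify that $g:=ev_E(f)$ is a strict automorphism of $\mathbb{G}_E(R)$. Naturality of $\phi_f$ along $\mathrm{pt}\hookrightarrow BC_2$ shows $g$ has vanishing constant term. For the linear term, restrict both $g$ and the coordinate $x_E$ of $ER^*BC_2$ along the bottom cell $S^1=\mathbb{RP}^1\hookrightarrow BC_2$: since products of reduced classes vanish on the suspension $S^1$, the powers $x_E^i$ with $i\ge2$ restrict to zero, while $\phi_f$ commutes with the suspension isomorphism and is the unital structure map on coefficients; comparing the two restrictions then forces the linear coefficient of $g$ to equal $1$. For the homomorphism property, apply naturality of $\phi_f$ to the multiplication $m\colon BC_2\times BC_2\to BC_2$, using $E^*(BC_2\times BC_2)\cong E^*[[x_1,x_2]]$, the identity $m^*x_E=F_E(x_1,x_2)$, and multiplicativity of $\phi_f$; this yields $g(F_E^R(x_1,x_2))=F_E^R(g(x_1),g(x_2))$. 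A power series of the form $x_E+O(x_E^2)$ satisfying this is invertible under substitution, with inverse again an endomorphism of $F_E^R$, so $g\in(\mathrm{Aut}\,\mathbb{G}_E)(R)$.

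Finally I would check that $ev_E$ is a morphism of presheaves of monoids. Naturality in $R$ is immediate from naturality of $\eta_E$ and of cohomology. The monoid law on $\mathrm{Spec}(E_*E)$ is convolution against the coproduct on $E_*E$ induced by $\mathrm{id}\wedge1\wedge\mathrm{id}\colon E\wedge E\to E\wedge E\wedge E$, which by the preamble $\eta_E$ carries to the internal composition $\star$ of morphisms $E\to ER$. So the content is to identify $(\phi_f\star\phi_g)(x_E)$ with the substitution of the power series $ev_E(f)$ and $ev_E(g)$ into one another, in the order matching composition in $\mathrm{Aut}\,\mathbb{G}_E$ — i.e. to see that evaluating the internal composition on the orientation class reproduces composition of coordinate changes, with coefficients multiplied via $\mu_R$. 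I expect this last point — matching the internal composition of cohomology operations with composition of formal coordinate changes, and in particular pinning down the composition order forced by the left/right unit conventions on $E\wedge E$ — to be the main obstacle; the remaining ingredients are all naturality squares.
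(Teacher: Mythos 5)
Your proposal follows essentially the same route as the paper: define $ev_E(f)$ by evaluating $\eta_E(f)$ on the orientation class of $BC_2$, get the formal-group-law endomorphism property from naturality along the multiplication $BC_2\times BC_2\to BC_2$ together with multiplicativity, and force the linear coefficient to be $1$ by restricting to $S^1\hookrightarrow BC_2$ and using stability. The one step you leave open (compatibility with the monoid structures, including the composition order) is precisely the step the paper itself dispatches with ``by inspection,'' so your write-up is, if anything, more detailed than the original.
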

\begin{proof} 
Let $e$ be the canonical generator of $E^*B C_2$. Let $R$ be an $E^*$-algebra and let $T$ be an element of $\text{Hom}_{MT}(E,ER)$. The element $T(e)$ may be identified with a power series $f(e)\in R[[x]]$. Moreover by multiplicativity of $T$ and naturality applied to the multiplication map $BC_2\times BC_2\rightarrow BC_2$, the power series $f(e)$ must be an endomorphism of the formal group law of $\mathbb{G}_E$. The linear coefficient $f_0$ is forced to be 1 by considering the pullback of $e$ along $S^1\hookrightarrow BC_2$ and invoking stability of $T$. Hence $f(e)$ defines a strict automorphism of $\mathbb{G}_E$. Precomposition of the assignment $T\mapsto f(e)$ with $\eta_E$ defines the desired map
$ev_E:\text{Spec} E_*E(R)\rightarrow\text{Aut}\mathbb{G}_E(R)$
which is clearly natural in $R$ and commutes with the monoidal structure on both sides by inspection.
\end{proof}

\begin{lem} Suppose that there is a homotopy commutative ring spectrum $M$ such that 
\begin{enumerate}
    \item There is a map $M\rightarrow H$ inducing an isomorphism on $\pi_0$.
    \item $M$ is real oriented with real orientation $e_M\in M^1BC_2$ and formal group $\mathbb{G}_M$ with formal group law $F_M$.
    \item $M_*M$ is flat over $M^*$, and the map $ev_M(R)$ afforded by the previous lemma is injective when $MR$ is a cohomology theory.
    \item For an $\mathbb{F}_2$-algebra $R$ let $\text{Aut}\mathbb{G}_M(R)$ be the groupoid whose objects are $\mathbb{F}_2$-algebra maps $M^*\xrightarrow{f}R$ and whose morphisms from $f$ to $g$ are strict isomorphisms of formal group laws $\phi$ from $f_*F_M$ to $g_*F_M$. Write $R_f$ for the induced $M^*$-algebra structure whose underlying ring is $R$. Note that $MR_f$ is canonically real oriented by the class $e_{MR_f}=[e_M\otimes 1]\in MR_f^*BC_2=M^*BC_2\otimes_{M^*}R_f$. Then there is a functor
    $$\gamma:\text{Aut}\mathbb{G}_M(R)\rightarrow MT$$
    which on objects sends $f$ to $MR_f$ and on morphisms sends $\phi$ to the transformation $\gamma(\phi):MR_f\rightarrow MR_g$ such that $\gamma(\phi)(e_{MR_f})=\phi(e_{MR_g})\in MR_g^*BC_2\simeq R_g[[e_{MR_g}]]$ as a power series.
\end{enumerate}
Then the evaluation map $ev_H$ from the previous lemma is an isomorphism. In particular if $\mathcal{A}_2$ denotes a Hopf algebra corepresenting $\text{Aut}\mathbb{G}_H$ then there is an isomorphism of Hopf algebras $H_*H\simeq\mathcal{A}_2.$
\end{lem}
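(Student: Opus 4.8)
The plan is to prove that $ev_H(R)$ is a bijection for every $R\in\text{Alg}_{\mathbb{F}_2}$; here $H^*=\mathbb{F}_2$, so $\text{Alg}_{H^*}$ is the category of graded $\mathbb{F}_2$-algebras and $HR$ is automatically a cohomology theory ($R$ being free over $\mathbb{F}_2$). Since the construction is evidently natural in $R$, bijectivity makes $ev_H$ an isomorphism of presheaves; as it is already a morphism of monoid-valued presheaves by the first lemma and lands in the group-valued $\text{Aut}\mathbb{G}_H$, it is then an isomorphism of affine group schemes over $\mathbb{F}_2$, and passing to coordinate rings yields $H_*H\simeq\mathcal{A}_2$ as Hopf algebras.

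For the setup: $2$ kills $\pi_0M=\mathbb{F}_2$ hence all of $M$, so $M^*$ is a graded $\mathbb{F}_2$-algebra with augmentation $\epsilon\colon M^*\twoheadrightarrow M^0=\mathbb{F}_2$; write $\bar\epsilon\colon M^*\xrightarrow{\epsilon}\mathbb{F}_2\to R$ and let $R_{\bar\epsilon}$ be the resulting $M^*$-algebra, with underlying ring $R$. Since $w_1$ of a tensor product of real line bundles is additive, $\mathbb{G}_H$ is the formal additive group over $\mathbb{F}_2$; and the image of $e_M$ under $M\to H$, being a real orientation of $H$ in the one-dimensional group $H^1BC_2$, equals $e_H$, so $\bar\epsilon_*F_M$ is the additive formal group law over $R$. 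Hence, in the groupoid $\text{Aut}\mathbb{G}_M(R)$ of hypothesis~(4), the automorphism group of the object $\bar\epsilon$ is precisely $\text{Aut}\mathbb{G}_H(R)$. The one non-formal input I will use is the Conner--Floyd identification furnished by $M\to H$: namely $H^*X\simeq M^*X\otimes_{M^*}\mathbb{F}_2$ naturally, equivalently that $M\to H$ exhibits $H$ as $M(\mathbb{F}_2)_\epsilon$ in $MT$. Base-changing along $\mathbb{F}_2\to R$ then gives an isomorphism $HR\simeq MR_{\bar\epsilon}$ in $MT$ carrying $e_{HR}$ to $e_{MR_{\bar\epsilon}}$ and the composite $M\to H\to HR$ to the unit $M\to MR_{\bar\epsilon}$. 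This is where the concrete features of $M$ enter, and I expect it to be the main obstacle; for $M$ of Thom type it is classical.

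For surjectivity of $ev_H(R)$: given $\phi\in\text{Aut}\mathbb{G}_H(R)$, regard it as an automorphism of $\bar\epsilon$ in $\text{Aut}\mathbb{G}_M(R)$ and apply the functor $\gamma$ of hypothesis~(4), obtaining $\gamma(\phi)\colon MR_{\bar\epsilon}\to MR_{\bar\epsilon}$ in $MT$ with $\gamma(\phi)(e_{MR_{\bar\epsilon}})=\phi$ as a power series. Transport $\gamma(\phi)$ across $HR\simeq MR_{\bar\epsilon}$ and precompose with the unit $H\to HR$; this produces a morphism $T_\phi\colon H\to HR$ in $MT$ with $T_\phi(e_H)=\phi$ as a power series. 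By the construction in the first lemma, $ev_H(R)$ is the map sending a transformation $T$ (identified with an element of $\text{Spec}(H_*H)(R)$ via the invertible $\eta_H$) to the power series $T(e_H)$; hence $ev_H(R)$ sends $\eta_H^{-1}(T_\phi)$ to $\phi$. All of this is natural in $R$, so $ev_H$ is a split epimorphism of presheaves.

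For injectivity: the map $M^*X\to H^*X$ is the quotient $M^*X\to M^*X\otimes_{M^*}\mathbb{F}_2$, hence surjective, so precomposition with $M\to H$ is injective on $\text{Hom}_{MT}(H,HR)$. Suppose $T,T'\colon H\to HR$ in $MT$ satisfy $T(e_H)=T'(e_H)$. Via $HR\simeq MR_{\bar\epsilon}$ and $\eta_M$ (legitimate since $M_*M$ is flat over $M^*$ by~(3) and $MR_{\bar\epsilon}\simeq HR$ is a cohomology theory), view $T\circ(M\to H)$ and $T'\circ(M\to H)$ in $\text{Spec}(M_*M)(R_{\bar\epsilon})$; since $M\to H$ sends $e_M$ to $e_H$, these have equal images under $ev_M(R_{\bar\epsilon})$, so the injectivity in hypothesis~(3) forces $T\circ(M\to H)=T'\circ(M\to H)$ and therefore $T=T'$. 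With the previous paragraph, $ev_H(R)$ is bijective for all $R$, which proves the lemma. Apart from the Conner--Floyd isomorphism for $M$, the argument is just a diagram chase combining the rigidity of~(3) with the geometric realization of automorphisms in~(4).
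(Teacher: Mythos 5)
Your overall skeleton is the same as the paper's: injectivity of $ev_H$ by pulling back along $M\to H$ (using surjectivity of $M^*X\to H^*X$ and the injectivity in hypothesis~(3)), and surjectivity by building a section out of $\gamma$ applied to the object $M^*\to\mathbb{F}_2\to R$ of $\text{Aut}\mathbb{G}_M(R)$. But there is one genuine gap, and it is exactly the step you flag as ``the one non-formal input'': the identification $H^*X\simeq M^*X\otimes_{M^*}\mathbb{F}_2$, i.e.\ $H\simeq M\mathbb{F}_2$ in $MT$. You cannot import this as a classical fact about ``$M$ of Thom type,'' because the lemma is stated for an abstract $M$ satisfying only hypotheses (1)--(4); everything must be derived from those hypotheses, and this identification is the one point where real work happens. (Note also that $M\mathbb{F}_2$ is a priori only a functor, not a cohomology theory: $\mathbb{F}_2$ need not be flat over $M^*$, so even exactness of $X\mapsto M^*X\otimes_{M^*}\mathbb{F}_2$ has to be argued.)

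The paper closes this gap using only the hypotheses, as follows. By (1), $2=0$ in $\pi_0M$ and hence in $M^*$, so $M^*$ is itself an $\mathbb{F}_2$-algebra and is a legitimate input $R=M^*$ to the groupoid in (4). By (2), the square of a real line bundle is trivial, so $[2]_{F_M}(e_M)=0$; a formal group law over an $\mathbb{F}_2$-algebra with vanishing $2$-series is strictly isomorphic to the additive one, so there is a morphism $\phi$ in $\text{Aut}\mathbb{G}_M(M^*)$ from the identity object to the object $M^*\to\mathbb{F}_2\to M^*$. Applying $\gamma$ to $\phi$ gives a natural isomorphism
$$M^*X\simeq M^*X\otimes_{M^*}M^*\xrightarrow{\ \gamma(\phi)\ }M^*X\otimes_{M^*}\mathbb{F}_2\otimes_{\mathbb{F}_2}M^*=M\mathbb{F}_2^*X\otimes_{\mathbb{F}_2}M^*,$$
which exhibits $M\mathbb{F}_2$ as a summand of the cohomology theory $M$, hence itself a cohomology theory; Eilenberg--Steenrod uniqueness applied to the map $M\mathbb{F}_2\to H$ induced by (1) then gives $M\mathbb{F}_2\simeq H$, and base change along $\mathbb{F}_2\to R$ gives $HR\simeq MR_{\bar\epsilon}$ for every $\mathbb{F}_2$-algebra $R$. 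With this inserted in place of your appeal to a classical Conner--Floyd theorem, the rest of your argument (the section via $\gamma$ and the unit $H\to HR$, and the injectivity chase) goes through and coincides with the paper's proof.
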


\begin{proof}
By item 1 $\mathbb{F}_2$ is an algebra over $M^*$. By item 2 the formal group law of $M^*BC_2$ has vanishing 2-series $[2](e_M)=0$ and so it is isomorphic to the additive one. Transporting such an isomorphism $\phi$ with $\gamma$ produces a natural isomorphism
$$M^*X\simeq M^*X\otimes_{M^*}M^*\xrightarrow{\gamma(\phi)}M^*X\otimes_{M^*}\mathbb{F}_2\otimes_{\mathbb{F}_2}M^*=M\mathbb{F}_2\otimes_{\mathbb{F}_2}M^*. $$
Thus $M\mathbb{F}_2$ is a summand of a cohomology theory and hence a cohomology theory. The map in item 1 induces a map $M\mathbb{F}_2\rightarrow H$ which by Eilenberg-Steenrod uniqueness must be an isomorphism. Thus, combining item 3 with the pullback along the surjection $M\rightarrow M\mathbb{F}_2\simeq H$, the map $ev_H$ must be injective. So to produce an inverse it suffices to produce a section. Now since $\mathbb{F}_2$ is a field we have $MR\simeq HR$ for all $\mathbb{F}_2$-algebras $R$. Note that there is an inclusion $B\text{Aut}\mathbb{G}_H\hookrightarrow\text{Aut}\mathbb{F}_H$ since $F_M$ is sent to the additive formal group law by the map $M^*\rightarrow\mathbb{F}_2$ in item 1. Then the desired section of $ev_H$ is given as follows. Write $\mathbb{F}_2\xrightarrow{u}R$ for the unit map of an $\mathbb{F}_2$-algebra $R$. Start with an automorphism of $\text{Aut}\mathbb{G}_H(R)$ and view this as an automorphism of the object $M^*\rightarrow\mathbb{F}_2\rightarrow R$ in $\text{Aut}\mathbb{G}_M(R)$. Using $\gamma$ that produces an automorphism of $MR\simeq HR$, which one then precomposes with the morphism $H\rightarrow HR$ induced by $u$.
\end{proof}

\begin{lem} A cohomology theory $M$ satisfying the conditions of the previous lemma exists. It is $MO$.
\end{lem}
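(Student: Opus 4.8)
The plan is to verify, one by one, the four conditions of the previous lemma for $M = MO$, the unoriented cobordism spectrum.

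\medskip

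\noindent\textbf{Condition 1.} I would invoke the classical Thom isomorphism $H_*(MO;\mathbb{F}_2) \cong \mathbb{F}_2[a_1,a_2,\dots]$ together with Thom's theorem that $MO$ is a wedge of (de)suspensions of $H\mathbb{F}_2$, or more economically just note that $\pi_0 MO = \mathbb{F}_2 = \pi_0 H$ via the degree/augmentation map (the Thom class $MO \to H\mathbb{F}_2$). The point is simply that the Thom class map realizes the $\pi_0$-isomorphism; this is immediate.

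\medskip

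\noindent\textbf{Condition 2.} $MO$ is the universal real-oriented (in the sense relevant here, $\mathbb{Z}/2$-oriented) ring spectrum: a real orientation is a Thom class for the tautological line bundle, i.e. a class $e_{MO} \in MO^1 BC_2$ restricting correctly, and $MO$ carries the tautological such class. The associated formal group law $F_{MO}$ on $MO^* BC_2 \cong MO^*[[e_{MO}]]$ is Quillen's computation: $MO^*$ is the Lazard ring for formal group laws $F$ with $F(x,x) = 0$ (equivalently $[2]_F(x) = 0$), and $F_{MO}$ is the universal such law. I would cite Quillen's work on $MO$ and complex/real cobordism here rather than reprove it.

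\medskip

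\noindent\textbf{Condition 3.} Flatness of $MO_*MO$ over $MO^*$ follows because $MO_*MO = MO_*[b_1, b_2, \dots]$ is a polynomial algebra (Thom-isomorphism/splitting argument), hence free. For injectivity of $ev_M(R)$: using the splitting $MO \simeq \bigvee \Sigma^? H\mathbb{F}_2$, the Hurewicz-type identification shows that a transformation $E \to ER$ in the image of $\eta$ is determined by its effect on $MO^* BC_2$ (equivalently on $MO^* \mathbb{RP}^\infty$), because $BC_2$-cohomology detects the relevant structure — the key input is that $MO^*(-)$ is generated as a module over operations by the orientation class, so knowing the image power series $f(e)$ pins down the whole transformation. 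This is the analogue of the fact that $H_*H\mathbb{F}_2$ acts faithfully on $H^*\mathbb{RP}^\infty$.

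\medskip

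\noindent\textbf{Condition 4.} This is the construction of the functor $\gamma$ and I expect it to be the main obstacle — not because it is deep, but because it requires carefully assembling the base-change/Landweber-type machinery in the real-oriented setting. The idea: given an $\mathbb{F}_2$-algebra map $f : MO^* \to R$, the theory $MOR_f = MO^*(-) \otimes_{MO^*} R_f$ is a cohomology theory (since any $\mathbb{F}_2$-algebra is flat — indeed free — over the polynomial ring $MO^*$, so no Landweber criterion is even needed, just flatness). A strict isomorphism $\phi$ of formal group laws from $f_*F_{MO}$ to $g_*F_{MO}$ must be realized by a morphism of cohomology theories $MOR_f \to MOR_g$: here I would use that $MOR_f$ is again real-oriented, so the space of multiplicative natural transformations $MOR_f \to MOR_g$ compatible with the theories' structure is controlled, via conditions 2 and 3 applied to these base-changed theories, by the set of strict isomorphisms between their formal group laws. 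Concretely one sends $\phi$ to the unique transformation with $\gamma(\phi)(e_{MOR_f}) = \phi(e_{MOR_g})$; existence of such a transformation is the content, and it follows from the universal property of $MO$ (condition 2, now for $R$-algebras) exhibiting $MOR_f$ as carrying the universal FGL isomorphic to $f_*F_{MO}$ over $R$-algebras. Functoriality in $\phi$ (respecting composition of strict isomorphisms) is then a formal check on power series. The one subtlety to be careful about is that everything is $2$-torsion / characteristic $2$, so "strict isomorphism" and the groupoid structure in item 4 behave well precisely because $[2]_{F_{MO}} = 0$ makes $F_{MO}$ a form of the additive group over $\mathbb{F}_2$-algebras, which is exactly what the previous lemma's proof exploited.
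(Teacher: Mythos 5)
Your conditions 1 and 2 are fine and match the paper: the universal real orientation $e_{MO}\in MO^1BC_2$ gives the map $MO\to H$ and the formal group, and no more than that is needed. For condition 3 the flatness half is right, but your injectivity argument leans on the splitting of $MO$ into suspensions of $H\mathbb{F}_2$ and a ``faithful action on $H^*\mathbb{RP}^\infty$'' heuristic. That splitting is classically proved \emph{using} the structure of the Steenrod algebra, so invoking it here is against the grain of (and arguably circular for) a paper whose whole point is to derive $\mathcal{A}_*$ without such input; moreover ``$MO^*(-)$ is generated as a module over operations by the orientation class'' is not a precise statement. The clean argument is the one via Thom classes: a multiplicative transformation $T$ is determined by its values on the universal Thom classes $\Theta_n\in MO^*MO(n)$ (by the colimit description of $MO$), $T(\Theta_n)$ is determined by $T(\Theta_1)$ by the splitting principle, and $T(\Theta_1)$ is exactly the power series $f(e)$. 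So injectivity of $ev_{MO}$ is the composite of the bijection $\eta_{MO}$ with the injection $T\mapsto T(\Theta_1)$.

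Condition 4 is where the real gap is. First, the assertion that ``any $\mathbb{F}_2$-algebra is flat --- indeed free --- over the polynomial ring $MO^*$'' is false: an $MO^*$-algebra structure $f:MO^*\to R$ need not be flat (e.g.\ the augmentation $MO^*\to\mathbb{F}_2$ is not), so you cannot conclude representability of $MOR_f$ this way; that $MOR_f$ is a cohomology theory is true, but only via the splitting $MO\simeq M\mathbb{F}_2\otimes_{\mathbb{F}_2}MO^*$ established in the \emph{previous} lemma, which in turn presupposes item 4. Second, you defer the actual existence of $\gamma(\phi)$ to ``the universal property of $MO$ over $R$-algebras,'' which is precisely the content to be supplied, and your version of it would require $MOR_g$ to be a representable real-oriented theory with a specified new orientation $\phi(e_{MOR_g})$ --- again circular at this stage. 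The construction that avoids all of this uses only the flatness of $MO_*MO$ over $MO^*$ (already established): the identification $MO_*MO\simeq MO_*[a_1,a_2,\dots]$ lets you encode the pair $(g,\phi)$ as a ring map $\Phi:MO_*MO\to A$, and $\eta_{MO}(\Phi):MO\to MOA_g$ is then a genuine multiplicative transformation sending $e_{MO}$ to the power series prescribed by $\phi$. One must then check that $\eta_{MO}(\Phi)$ restricted to coefficients is $f$ rather than $g$ (this is the point of the coaction formula $e_{MO}\mapsto [e_{MO}\otimes 1]+[e_{MO}^2\otimes a_1]+\cdots$), so that the $A$-linear extension is well defined as a map $MOA_f^*X\to MOA_g^*X$; functoriality then follows from the unique characterization of $\gamma(\phi)$ as the $A$-linear multiplicative transformation with the prescribed effect on $e_{MOA_f}$.
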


\begin{proof}
Let $e_{MO}$ denote the homotopy class of the inclusion $\mathbb{RP}^\infty\hookrightarrow\Sigma MO$. This exhibits $(MO,e_{MO})$ as the universal real oriented multiplicative cohomology theory. Hence there is a map $MO\rightarrow H$ sending $e_{MO}$ to $e\in H^*BC_2$. Thus items 1 and 2 are satisfied. Since $MO$ is real oriented, an easy calculation\footnote{The real orientation $e_{MO}$ implies that all the differentials in the relevant Atiyah-Hirzebruch sequences are trivial.} shows that $MO_*MO\simeq MO_*[a_1,a_2,...]$ which is free and hence flat over $MO^*$. Let $A$ be an $MO^*$-algebra and $T:MO\rightarrow MOA$ a multiplicative natural transformation. Let $\Theta_n\in MO^*MO(n))$ be the universal Thom class. By the splitting principle $T(\Theta_1)$ determines $T(\Theta_n)$, which determines all of $T$ by universality. Therefore if $MOA$ is a cohomology theory then $ev_{MO}(A)$ is the composition of an isomorphism ($\eta_{MO}(A)$) and an injection ($T\mapsto T(\Theta_1)$) so item 3 is satisfied.
\begin{comment}
Let $A$ be a flat $MO^*$-algebra, $ f:MO_*MO\rightarrow A$ a ring map, and $T:=\eta_{MO}(f):MO\rightarrow MOA$ the multiplicative natural transformation afforded by the preamble. The component $MO^*BO(1)\rightarrow MOA^*BO(1)$ determines the components $MO^*BO(n)\rightarrow MOA^*BO(n)$ by the splitting principle, which in turn determine the components $MO^*MO(n)\rightarrow MOA^*MO(n)$ by the Thom isomorphism\footnote{If $\Theta_n$ and $e_n$ denote the universal Thom and euler classes, then one uses injectivity of the pullback along $BO(n)\rightarrow MO(n)$ to see that the $T(\Theta_n)$ is the image under the Thom isomorphism of $\frac{T(e_n)}{e_n}$. Note that $T(e_n)$ is indeed divisible by $e_n$ by multiplicativity and the fact that by strictness the power series $T(e_{MO})$ is divisible by $e_{MO}$.}, which by universality determines all of $T$. So item 3 is satisfied.
\end{comment}
Let $A$ be an $\mathbb{F}_2$-algebra receiving two maps $f,g:MO^*\rightarrow A$ and let $\phi\in A[[x]]$ be an isomorphism from $f_*F_{MO}$ to $g_*F_{MO}$. Note that $MOA_f$ and $MOA_g$ are canonically oriented by $e_{MOA_f}$ and $e_{MOA_g}$ as defined in item 4. It remains to construct (functorially) an automorphism $\gamma(\phi):MOA_f\rightarrow MOA_g$ such that $T(e_{MOA_f})=\phi(e_{MOA_g})$. By the calculation $MO_*MO\simeq MO_*\otimes_{\mathbb{F}_2}\mathbb{F}_2[a_1,a_2,...]$, the pair $(g,\phi)$ corresponds\footnote{By strictness $f(e_{MOA_g})=e_{MOA_g}+f_1e_{MOA_g}^2+...$ and the corresponding map $\Phi:MO_*[a_1,...]\rightarrow A$ sends $a_i$ to $f_{i+1}$.} to an $\mathbb{F}_2$-algebra map $\Phi:MO_*MO\rightarrow A$ and so using the natural transformation $\eta_{MO}$ from the preamble one gets multiplicative transformation $\eta_{MO}(\Phi):MO\rightarrow MOA_g$ with the property that $\eta_{MO}(\Phi)(e_{MO})=f(e_{MOA_g})$. Indeed the image of $e_{MO}$ under the map $MO^*BC_1\xrightarrow{1\wedge\text{id}}(MO\wedge MO)^*BC_2\simeq MO^*BC_2\otimes_{MO^*}MO_*MO$ is $[e_{MO}\otimes1] + [e_{MO}^2\otimes a_1]+...$ by a simple calculation using the pairing between homology and cohomology\footnote{See e.g. Adams [1] Lemma 6.3 page 60 for the complex version.}. Note that the map $\eta_{MO}(\Phi)(\text{pt}):MO^*\rightarrow A$ is not equal to $g$. Now $\eta_{MO}(\Phi)$ induces an $A$-linear map $MOA_f^*X\rightarrow MOA_g^*X$ and produces the desired map $\gamma(\phi):MOA_f\rightarrow MOA_g$ which is what we are really after. Functoriality\footnote{This argument for functoriality is due to Quillen in [4].} is proved by noting that $\gamma(\phi)$ is uniquely characterized by the properties of being $A$-linear, multiplicative, and sending $e_{MOA_f}$ to $\phi(e_{MOA_g})$.
\end{proof}
\begin{rem}
There is a similar but slightly messier derivation of the dual Steenrod algebra at odd primes which uses a mod $p$ analog of $MO$ constructed by Shaun Bullett [2] using manifolds with singularities. I decided to leave the odd primes to forthcoming work\footnote{This work will also discuss derivations of algebras of cohomology operations of other spectra as well as a derivation of the Dyer-Lashof algebra.} since the added technical difficulties (which in this business are usual at odd primes) obscures the simplicity and brevity of the exposition, which is a cardinal virtue of this note.
\end{rem}

\noindent
\textbf{Acknowledgements}: Thanks to Tim Campion for extensive comments on previous drafts, and to Robert Burklund for catching a gap in a proof.

\section{References}

[1] Adams, F. \textit{Stable homotopy and generalised homology}. Chicago Lectures in Mathematics. The University of Chicago Press, 1995.

\

\noindent
[2] Bullett, S. \textit{A Z/p Analogue for Unoriented Bordism}. University of Warwick PhD Thesis (1973)

\

\noindent
[3] Peterson, E. \textit{Formal geometry and bordism operations}. Cambridge Studies in Advanced Mathematics, v17 (2019)

\

\noindent
[4] Quillen, D. \textit{Elementary Proofs of Rome Results of Cobordism Theory Using Steenrod Operations}. Advances in Mathematics, 7 29-56 (1971)
\end{document}